\DeclareSymbolFont{AMSb}{U}{msb}{m}{n}
\DeclareMathSymbol{\N}{\mathbin}{AMSb}{"4E}
\DeclareMathSymbol{\Z}{\mathbin}{AMSb}{"5A}
\DeclareMathSymbol{\R}{\mathbin}{AMSb}{"52}
\DeclareMathSymbol{\Q}{\mathbin}{AMSb}{"51}
\DeclareMathSymbol{\I}{\mathbin}{AMSb}{"49}
\DeclareMathSymbol{\C}{\mathbin}{AMSb}{"43}
\newcommand{\Cs}{\C^{x}}
\newcommand{\PP}{\mathbb{P}^1}
\newcommand{\Aone}{\mathbb{A}^1}
\newcommand{\A}{\mathcal{A}}
\newcommand{\hh}{\check{h}}
\newcommand{\Gm}{\mathbb{G}_m}
\newcommand{\g}{\mathfrak{g}}
\newcommand{\Al}{\mathcal{A}_\lambda}
\newcommand{\Am}{\mathcal{A}_\mu}
\newcommand{\T}{\mathfrak{t}^{\vee *}}
\newcommand{\Tt}{T\times\Gm}
\newcommand{\tN}{\mathfrak{t}_N^{\vee*}}
\newcommand{\GO}{G(\mathcal{O})}
\newcommand{\HO}{H(\mathcal{O})}
\newcommand{\Gr}{\mbox{Gr}_G}
\newcommand{\GrH}{\mbox{Gr}_H}
\newcommand{\GI}{G(\C[t^{-1}])}
\newcommand{\II}{I^-}
\newcommand{\Gra}{\mbox{Gra}_G}
\newcommand{\GraM}{\mbox{Gra}_M}
\newcommand{\OO}{\mathcal{O}}
\newcommand{\DO}{D_{G(\mathcal{O})}(Gr_G)}
\newcommand{\DOloop}{D_{G(\mathcal{O})\rtimes\Gm}(Gr_G)}
\newcommand{\DOH}{D_{G(\mathcal{O})\rtimes\Gm}(Gr_H)}
\newcommand{\DOZ}{D^\zeta_{G(\mathcal{O})}(Gra_G)}
\newcommand{\DOZloop}{D^\zeta_{G(\mathcal{O})\rtimes\Gm}(Gra_G)}
\newcommand{\F}{\mathcal{F}}
\newcommand{\G}{\mathcal{G}}
\newcommand{\K}{\mathcal{K}}
\newcommand{\Perv}{\mathbb{P}\mbox{erv}_N}
\newcommand{\Ext}{\mbox{Ext}^{\bullet}}
\newcommand{\Hom}{\mbox{Hom}^{\bullet}}
\newcommand{\fiber}{\mbox{Ext}^{\bullet}(\mathcal{F},\cdot)}
\newcommand{\Fiber}{\mbox{Ext}^{\bullet}_{G\times\Gm}(\mathcal{F},\cdot)}
\newcommand{\ExtFan}{\mbox{Ext}^{\bullet}(\mathcal{F},\A_n)}
\newcommand{\ExtTFA}{\mbox{Ext}^{\bullet}_{\Tt}(\mathcal{F},\Al)}
\newcommand{\ExtTFa}{\mbox{Ext}^{\bullet}_{\Tt}(\mathcal{F},\A)}
\newcommand{\ExtGFa}{\mbox{Ext}^{\bullet}_{G\times\Gm}(\mathcal{F},\A)}
\newcommand{\ExtTFan}{\mbox{Ext}^{\bullet}_{\Tt}(\mathcal{F},\A_n)}
\newcommand{\ext}{\mathcal{E}\mbox{xt}}
\newcommand{\ExtFF}{\mbox{Ext}^\bullet(\F,\F)}
\newcommand{\CohH}{H^\bullet_{\HO\rtimes\Gm}(\GrH)}
\newcommand{\CohG}{H^\bullet_{\GO\rtimes\Gm}(\Gr)}
\newcommand{\CohHH}{H^\bullet(\GrH)}
\newcommand{\extFF}{\mathcal{E}\mbox{xt}(\F,\F)}
\newcommand{\ExtTFF}{\mbox{$Ext_T$($\F$,$\F$)}}
\newcommand{\ExtTtFF}{\mbox{$Ext_{\Tt}$($\F$,$\F$)}}
\newcommand{\ExtGFF}{\mbox{Ext}^{\bullet}_{\GI\rtimes\Gm}(\F,\F)}
\newcommand{\ExtGFA}{\mbox{$Ext^{\bullet}_{G\rtimes\Gm}$($\F$,$\Al$)}}
\newcommand{\XN}{X^{*}(T^{\vee}_N)}
\newcommand{\GN}{G^{\vee}_{N}}
\newcommand{\TN}{T^{\vee}_N}
\newcommand{\MN}{M^{\vee}_{N}}
\newcommand{\gN}{\mathfrak{g}_N^\vee}
\newcommand{\Il}{\mathcal{I}_\lambda}
\newcommand{\MIl}{\mathcal{I}_{M,\lambda}}
\newcommand{\XNplus}{X^{*}_{+}(T^{\vee}_N)}
\newcommand{\HC}{\mathcal{HC}_\hbar}
\newcommand{\HCfr}{\mathcal{HC}_\hbar^{fr}}
\newcommand{\QCoh}{QCoh^{\Gm}((\T/W)^2\times\Aone)}
\newcommand{\cone}{\OO(N_{(\T/W)^2}\Delta)}
\newcommand{\CohN}{Coh^{\Gm}(N_{(\tN/W)^2}\Delta)}
\newcommand{\IC}{\mathcal{IC}}
\newcommand{\sym}{D_{perf}^{G^\vee}(\mbox{Sym}^\square(\g^\vee))}
\newcommand{\symN}{D_{perf}^{\GN}(\mbox{Sym}^\square(\gN)}
\newcommand{\alg}{\mathcal{E}}
\newtheorem{thm}{Theorem}[section]
\newtheorem{lem}[thm]{Lemma}
\newtheorem{rmk}[thm]{Remark}
\title{Twisted Satake Category}
\author{Bhairav Singh}
\begin{document}
\date{}
\maketitle

\begin{abstract}
We extend Bezrukavnikov and Finkelberg's description [BF] of the $G(\C[[t]])$-equivariant derived category on the affine Grassmannian to the twisted setting of Finkelberg and Lysenko [BF].  Our description is in terms of coherent sheaves on the twisted dual Lie algebra.  We also extend their computation of the corresponding loop rotation equivariant derived category, which is described in terms of Harish-Chandra bimodules for the twisted dual Lie algebra.  To carry this out, we have to find a substitute for the functor of global equivariant cohomology.  We describe such a functor, and show as in [BF] that it is computed in terms of Kostant-Whittaker reduction on the dual side.
\end{abstract}

\section{Introduction}

\subsection{Background and notation}

Let $G$ be a complex reductive group, $T$ a maximal torus, and $G^\vee$ its Langlands dual.  Let $\OO=\C[[t]]$, $\K=\C((t))$.  The affine Grassmanian of $G$ is an ind-scheme whose $\C$-points are $G(\K)/G(\OO)$.  One of the basic results in representation theory is the geometric Satake equivalence.  This was mostly proven by Lusztig in \cite{L1} (though not explicitly stated in as an equivalence of categories there), and then in \cite{G1} \cite{MV}

$$\mathcal{P}_{\GO}(\Gr)\simeq \mbox{Rep}(G^\vee)$$

One of the motivations for this theorem was to describe the Hecke modifications that appear in the geometric Langlands conjectures.  The present formulation of these conjectures requires a derived version of the Satake equivalence.  This is provided by \cite{BF} who prove

$$\DO\simeq \sym$$

The right hand side is the subcategory of perfect complexes in the $\G^\vee$-equivariant derived category of modules over the symmetric algebra of $\g^\vee$, considered as a differential graded algebra with zero differential (this is what $\square$ denotes).  In addition to the action of $\GO$, $\Gr$ admits and action of $\Gm$ by "loop-rotation", which in the algebraic context means scaling the parameter $t$.  The methods of Bezrukavnikov-Finkelberg also compute the derived category with this additional equivariance as

$$\DOloop\simeq D_{perf}^{G^\vee}(U^{\square}_{\hbar})$$

The right hand side is the subcategory of perfect complexes in the $\G^\vee$-equivariant derived category of modules over a graded version of the universal enveloping algebra of $\g^\vee$, considered as a differential graded algebra with zero differential.  In both cases, the introduction of a differential graded algebra structure with zero differential may seem superfluous, but it should really be thought of as a statement about $\DO$ or $\DOloop$.

The geometric Langlands correspondence is expected to admit a quantum deformation.  Corresponding to this deformation as a root of unity, an analogue of the geometric Satake equivalence was given in \cite{FL}.  To describe it we have to introduce some more notation.  Let $\Gra$ be the total space of the determinant line bundle of $\Gr$ with the zero section removed, i.e. a $\Gm$-torsor.  Fix a positive integer $N$.  Let $\hh$ be the dual Coxeter number of $G$, and $d$ the divisor of $\hh$ defined in \cite{FL}.  Let $\zeta$ be a fixed $2\hh N/d^{th}$ order character (note that this is different from $\zeta$ in \cite{FL}). Let $\Perv$ be the category such that $\Perv$[1] is the category of $\GO$-equivariant perverse sheaves on $\Gra$ with $\Gm$-monodromy $\zeta$.  Let $\XN:=\{\nu\in X_*(T)|d\iota(\nu)\in NX^*(T)\}$, and $\TN:=\mbox{Spec}\C[\XN]$.  The $\GO$-orbits $\Gra^\lambda\subset\Gra$ that support a $\zeta$-monodromic rank one local system $L\zeta$ correspond exactly to $\lambda\in X^*_+(\TN)\subset X_*^+(T)$.  Let $\Al$ denote the IC-extension of this local system from $\Gra^\lambda$.  The main result of \cite{FL} is an equivalence of tensor categories

$$\Perv\simeq \mbox{Rep}(\GN)$$

where $\GN$ is a reductive groups with maximal torus $\TN$.  The goal of this paper is to extend the results of \cite{BF} to the setting of \cite{FL}.  Specifically, we prove

\begin{thm}
There are equivalences of DG-categories (here $U_\hbar$ is the graded enveloping algebra for $\gN$)

$$\DOZ\simeq \symN$$
$$\DOZloop\simeq D_{perf}^{\GN}(U^{\square}_{\hbar})$$
\end{thm}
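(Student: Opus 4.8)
The plan is to follow the strategy of [BF] closely, replacing their global cohomology functor with the twisted substitute alluded to in the abstract, and bootstrapping from the twisted geometric Satake equivalence $\Perv\simeq\mbox{Rep}(\GN)$ of [FL] in place of ordinary geometric Satake. First I would establish the abelian-level input: by [FL] we have a tensor equivalence $\Perv\simeq\mbox{Rep}(\GN)$, so that $\Al$ corresponds to the irreducible $\GN$-representation of highest weight $\lambda\in\XNplus$, and the $\Gm$-equivariant (loop-rotation) structure endows the convolution category with the extra grading data. The key technical object is a functor $F$ playing the role of ``global equivariant cohomology'' $\mbox{Ext}^\bullet_{\GO\rtimes\Gm}(\F,\cdot)$ on the twisted category $\DOZloop$; I would define it via equivariant Ext against the unit object (the IC-sheaf $\A_0$ of the point stratum carrying $L\zeta$), and show it is representable. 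The heart of the matter is to compute this functor on the dual side: I claim $F$ is computed by Kostant--Whittaker reduction for $\gN$, exactly as in [BF, \S 2--3]. Concretely, $\mbox{Ext}^\bullet_{\GO}(\A_0,\A_0)$ should be identified with the ring of functions on the Kostant--Whittaker (i.e. universal centralizer) scheme for $\GN$, and the loop-rotation grading deforms this to the graded enveloping algebra $U_\hbar$ of $\gN$.

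Granting this computation, the argument assembles as in [BF]. First one treats the ``classical'' statement $\DOZ\simeq\symN$: the functor $F$ intertwines the convolution monoidal structure on $\DOZ$ with a tensor structure on modules over $\mbox{Sym}(\gN)$; one checks $F$ is monoidal, conservative, and fully faithful on the generators $\Al$ by reducing Ext-computations between $\Al$ and $\Am$ to the corresponding Hom-spaces of $\GN$-representations tensored with $\mbox{Sym}(\gN)$, using the twisted Satake semisimplicity and a parity/formality argument for the relevant Ext-groups. Then one promotes this to an equivalence of DG-categories by checking generation (the $\Al$ generate $\DOZ$ as a triangulated category, which follows from the stratification of $\Gra$ by $\GO$-orbits and the fact that exactly the $\lambda\in\XNplus$ contribute) and that $F$ induces isomorphisms on all higher Ext-groups, i.e. $F$ is a ``formality''-type equivalence onto perfect complexes. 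For the loop-rotation version $\DOZloop\simeq D_{perf}^{\GN}(U^\square_\hbar)$, the same scheme applies with the $\Gm$-equivariant Ext-algebra now a filtered/graded deformation whose associated graded recovers the classical answer; one invokes the standard deformation argument (as in [BF, \S 5]) that an equivalence at $\hbar=0$ lifting compatibly with the filtration yields the equivalence over $\C[\hbar]$.

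The main obstacle, I expect, is the construction and computation of the substitute cohomology functor $F$. In the untwisted case [BF] exploit that $H^\bullet_{\GO\rtimes\Gm}(\Gr)$ carries a canonical algebra structure and identify it geometrically; in the twisted setting there is no sheaf of rings on $\Gra$ playing this role (the monodromy $\zeta$ obstructs the naive definition), which is precisely why the abstract flags this as the point requiring a new idea. I would address it by passing to the ind-scheme $\Gra_T$ (the torus version, $\GraT$) where the twisted IC-sheaves are rank-one local systems, computing the $T$-equivariant twisted cohomology there explicitly in terms of $\mathfrak{t}_N^{\vee*}$, and then using a Mirković--Vilonen style weight-functor / hyperbolic-localization argument to transfer to $G$ and identify the answer with Kostant--Whittaker reduction for $\gN$. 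A secondary difficulty is bookkeeping the normalization constants ($\hh$, $d$, and the order $2\hh N/d$ of $\zeta$) so that the torus $\TN$ and the lattice $\XN$ match on both sides; this is essentially the compatibility already verified in [FL], and I would cite it rather than redo it. Once $F$ and its dual-side description are in hand, the remaining steps are routine adaptations of [BF].
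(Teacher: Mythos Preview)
Your overall strategy---follow [BF] with a substitute for the global cohomology functor, identify that substitute with Kostant--Whittaker reduction for $\gN$, and then run the formality/generation argument---is exactly the paper's plan. But the specific substitute you propose is the wrong object, and this is the crux of the whole argument.

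You take $F=\Ext_{\GO\rtimes\Gm}(\A_0,\cdot)$ with $\A_0$ the IC-sheaf of the \emph{point} stratum. This does not play the role of global cohomology. Already in the untwisted case, $\Ext_{\GO}(\IC_0,\IC_0)$ is just $H^\bullet_{\GO}(\mathrm{pt})\simeq\OO(\mathfrak{t}^{\vee*}/W)$, not $H^\bullet_{\GO}(\Gr)$; the nontrivial factor $H^\bullet(\Gr)$---which is what encodes the principal nilpotent and hence the Kostant--Whittaker picture---is invisible from the unit object alone. In the twisted setting the same problem persists: $\Ext(\A_0,\A_0)$ is far too small to serve as the ring over which the ``fiber'' modules are organized, so your claimed identification of $\Ext_{\GO}(\A_0,\A_0)$ with functions on the universal centralizer cannot hold. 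The adjunction trick $\Ext(\Al,\Am)\simeq\Ext(\A_0,\Am\ast\Al^\vee)$ is useful (the paper uses it to prove purity), but it does not supply the module-over-a-ring structure you need.

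The paper's substitute is $\Ext(\F,\cdot)$ where $\F$ is the IC-extension of the rank-one $\zeta$-monodromic local system on the \emph{open} $\GI$-orbit (more precisely, a direct sum of such over a set of representatives of $Z(\GN)$), viewed as a pro-object. This is the genuine analogue of the constant sheaf on $\Gr$: it is supported everywhere, its stalks are controlled by inverse Kazhdan--Lusztig polynomials, and one computes $\ExtGFF\simeq\bigoplus_{Z(\GN)}\OO(N_{(\tN/W)^2}\Delta)$, exactly matching $H^\bullet_{\HO\rtimes\Gm}(\GrH)$ for $H=(\GN)^\vee$. A second ingredient you do not mention, but which carries much of the weight in the paper, is a direct geometric comparison with $\GrH$: via GKM localization to $T$-fixed points and one-dimensional orbits one matches $\Ext_{T\times\Gm}(\F,\Al)$ with $H^\bullet_{T_H\times\Gm}(\Al')$, first in rank one and then in general via Levi compatibility, and finally identifies the Ext-algebras $\alg_\lambda$ on both sides. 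Your weight-functor/hyperbolic-localization idea is in the right direction for the canonical filtration, but without the correct $\F$ and the $\GrH$ comparison the argument does not close.
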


It will be helpful to first recall the notations and arguments of \cite{BF}, since ours closely follow theirs.  The $\GO$ orbits on $\Gr$ are indexed by dominant coweights $\lambda\in X_*^+(T)$.  Let $\IC_\lambda$ be the intersection cohomology sheaf of the closure of the corresponding orbit, and the $V_\lambda$ the corresponding representation of $G^\vee$.  Then $V_\lambda = H^\bullet(\IC_\lambda)$, and thus $V_\lambda$ has a natural action of $H^\bullet(\Gr)$ which was described in \cite{G1}.  To describe the derived category, Bezrukavnikov and Finkelberg study the action of the algebra $\CohG$ on $H^\bullet_{\GO\rtimes\Gm}(\IC_\lambda)$.  If one rewrites this as $\Ext_{\GO\times\Gm}(\C,\C)$ acting on $\Ext_{\GO\times\Gm}(\C,\IC_\lambda)$, their argument can be considered somewhat analogous to describing a (nice enough) abelian category as modules over the endomorphism algebra of a projective generator.  In this paper, all Ext's between sheaves are taken in the (equivariant) derived category.

The proof has two key steps (shown in the opposite order in \cite{BF}): to show that the natural map

$$\Ext_{\GO\rtimes\Gm}(\IC_\lambda,\IC_\mu)\rightarrow \mbox{Hom}^\bullet_{H^\bullet_{\GO\rtimes\Gm}(\Gr)} (H^\bullet_{\GO\rtimes\Gm}(\IC\lambda),H^\bullet_{\GO\rtimes\Gm}(\IC_\mu))$$

is an isomorphism and that the Ext groups are pure, and then to compute action of $H^\bullet_{\GO\times\Gm}(\Gr)$ on $H^\bullet_{\GO\times\Gm}(\IC_\lambda)$ in terms of $G^\vee$.  The first step essentially follows from \cite{G2}.  The second uses the notion Kostant-Whittaker reduction.  Let $U_\hbar$ be the graded enveloping algebra of $\g$, i.e. it has the defining relations $xy=yx=\hbar[x,y]$, and let $\HC$ be the category of integrable $U_\hbar\otimes_{\C[\hbar]}U_\hbar$-modules, and $\HCfr$ the subcategory of bimodules of the form $U_\hbar\otimes V$, $V$ a representation of $G^\vee$.  Bezrukavnikov and Finkelberg define a functor $\kappa_\hbar:\HC\rightarrow\QCoh$.  Let $\cone$ be ring of functions of the deformation to the normal cone of the diagonal in $(\T/W)^2$.  The image of an element of $\HCfr$ is always supported on $N_{(\T/W)^2}\Delta)$.  Then

$$\CohG\simeq \oplus_{Z(G^\vee)}\cone$$

and the inverse of the Satake equivalence $S$ extends to a full embedding $S_\hbar:\HCfr\rightarrow\DO$ such that

$$\kappa_\hbar\simeq H^\bullet_{\GO\rtimes\Gm}\circ S_\hbar$$

For $V\in\mbox{Rep}{G^\vee}$, let $\phi(V)=\kappa_\hbar(U_\hbar\otimes V)$ and let $V^\lambda$ denote the $\lambda$ weight space.  Let $\pi:\T\rightarrow\T/W$ be the projection, and let $\Gamma_\lambda$ be the subscheme of $\T\times\T\times\Aone$ defined by $x_2=x_1+a\lambda$.  To show $\phi(V)\simeq H^\bullet_{\GO\rtimes\Gm}(S(V))$ canonically \cite{BF} describe filtrations on $\phi(V)\otimes_{\OO(\T/W)}\OO(\T)$ and $H^\bullet_{\GO\rtimes\Gm}(S(V)) \otimes_{\OO(\T/W)}\OO(\T)\simeq H^\bullet_{T\times\Gm}(S(V))$, both with associated graded

$$\oplus_{\lambda\in X_*(T)}(Id,\pi,Id)_*\OO(\Gamma_\lambda)\otimes V^\lambda$$

They are then able to identify $\phi(V)\otimes_{\OO(\T/W)}\OO(\T)$ and $H^\bullet_{T\times\Gm}(S(V))$ inside

$$\oplus_{\lambda\in X_*(T)}(Id,\pi,Id)_*\OO(\Gamma_\lambda)\otimes V^\lambda \otimes_{\OO(\T\times\Aone)} K(\T\times\Aone)$$

using a clever reduction to rank one.

In the following we will abuse notation and use $U_\hbar$, $\HCfr$, $\kappa_\hbar$, $\phi$, etc. for the corresponding objects for $\GN$.  We hope this doesn't cause any confusion.

\subsection{Outline of the proof}

Our proof follows the above strategy, showing $\Ext_{\GO\rtimes\Gm}(\Al,\Am)$ is pure, and computing is in terms of $\GN$.  Several arise if one tries to apply the proof of \cite{BF} directly:  To show purity, one cannot directly appeal to \cite{G2} since the fixed points of $T\times\Gm$ on $\Gra$ are not isolated and since we work with twisted coefficients.  Also because of twisted coefficients (monodromic local systems) the global cohomology of an element of $\Perv$ is automatically zero.  So we need a substitute for this, and for the total cohomology of $\Gra$.  The solution is to find a substitute for the functor $\Ext(\C,\cdot)$, and hence a substitute for the constant sheaf $\C$.  The group $\GI$ acts on $\Gra$ with an open orbit $U$ through the base point, which is a $\Gm$ torsor over an infinite affine space.  This torsor is in fact trivial, and admits a rank one local system with $\Gm$-monodromy $\zeta$.  Let $\F$ be the IC-extension of this local system to $\Gra$.  This is the desired substitute for $\C_{\Gra}$ (more precisely we well take a direct sum of several similarly defined sheaves, see below).  One should really define $\F$ on the moduli space of bundles on $\PP$ or equivalently Kashiwara's "thick" Grassmannian \cite{KT}, but one can consider $\F$ as a pro-object in the category of sheaves on $\Gra$ (this seems more natural when one notes that $\C_{\Gra}$ is also a pro-object).  The open orbit $U$ intersects every $\Gra^\lambda$ in an affinely embedded open set, and one can use base change to describe $\F$ as a limit of the corresponding IC-extensions.  Then studying the action of $\ExtGFF$ on $\ExtGFA$ allows us to describe the derived category.

%\begin{thm}
%$$\DOZ\simeq D_{perf}^{\GN}(\mbox{Sym}^\square(\g_N^\vee))$$
%$$\DOZloop\simeq D_{perf}^{\GN}(U^\square_{\hbar})$$
%\end{thm}

Throughout this paper $H$ will denote the Langlands dual group of $\GN$.  One should think of the geometry of monodromic perverse sheaves on $\Gra$ as equivalent to the geometry of usual perverse sheaves on $\GrH$.  Not only is this a guiding principle, but in some cases we use an explicit comparison between the two to prove things about $\Perv$.  We will give one proof of Theorem 1.1 by directly identifying it with $\DOH$. One may also prove it by a direct comparison to Kostant reduction on $\mbox{Rep}(\GN)$ as in \cite{BF}, but even there the simplest proofs of some facts use a geometric comparison to $\GrH$.  Under the identification

$$X_*(T_H)\simeq X^*(\TN)\subset X_*(T)$$

we can naturally identify $X_*(T_H)$ with a subset of $X_*(T)$

We will denote the dominant coweight of $T_H$ corresponding to $\lambda\in X^*_+(\TN)\subset X_*^+(T)$ by $\lambda'\in X_*^+(T_H)$, and similarly for the corresponding IC-sheaves, etc.
%We will sometimes identity $\OO(\T)$, $\OO(\tN)$, and $\OO(\T_H)$ without comment.
Let $\Pi:\Gra\rightarrow\Gr$ be the projection.

\subsection{Acknowledgements}

I would like to thank my advisor Roman Bezrukavnikov for suggesting this problem, and for numerous helpful discussions.  The idea to use the sheaf $\F$ belongs to him.  I would also like to thank Ivan Mirkovic, Dennis Gaitsgory, Chris Dodd, and Tsao-Hsien Chen for useful explanations.  This paper owes a large debt to the work Bezrukavnikov and Finkelberg, from which we have borrowed several arguments.

\section{Cohomology}

This section contains the counterparts of the cohomology computations in \cite{BF}

\subsection{Cohomology}

Let $I\subset \XNplus$ be a minimal (in the dominance order) set of representatives for $Z(\GN)$.  For $i\in I$, the finite codimensional orbit $\GI\cdot i$ admits a $\zeta$-monodromic rank one local system.  Let $\F_i$ be the minimal extension of this local system to $\Gra$ (see \cite{KT} for a precise definition), and let $\F=\oplus_{i\in I}\F_i$.  As remarked in the introduction, we can consider $\F$ as a pro-object of $D_{\GI\rtimes\Gm}(\Gra)$.

A key part of argument of \cite{BF} is describing the $G$- or $G\times\Gm$-equivariant cohomology of an irreducible perverse sheaf as a module over the global equivariant cohomology ring.  In the monodromic setting, this cohomology is always zero.  It turns out a good subsitute is given by the functor $\Fiber$.  Before we compute the ring $\ExtGFF$, let us recall some properties of $\F$.

Let $W'$ be the subset of the affine Weyl group given by $\XN\rtimes W^{fin}$.  The inverse Kazhdan-Lusztig polynomials $Q_{e,w}(q)$ are equal to 1 for any $w\in W$', where $e$ is the identity element, and are zero for $w\not\in W'$.  This is well known for finite Weyl groups.  In the affine case, it follows from the pro-smoothness of the 'thick' Grassmannian (or the smoothness of $\mbox{Bun}_{\PP}$).  First consider the case when the monodromy is trivial.  Then $\F$ is just the IC-extension of the constant sheaf (on each component), so by smoothness $\F$ is also a constant sheaf.  In general, it follows from the fact that $W'$ is isomorphic to $W^{aff}$ as a Coxeter group, which is the identity on the finite part $W$ and extends the isomorphism of lattices $\Lambda\simeq\XN$.
%Let $I^{-}\subset\GI$ be the inverse image of $B^-$ under $t^{-1}\mapsto0:\GI\rightarrow G$.  The $I^{-}$-orbit of the base point is the same as the $\GI$-orbit.
By theorem 5.3.5 of \cite{KT}, above description of the Kazhdan-Lusztig polynomials says that the cohomology sheaves of the restriction of $\F$ to a $\GI$-orbit corresponding to $\XNplus$ are only nonzero in degree zero where they are of rank one, and are zero on all other orbits.

\begin{thm}
$$\ExtGFF\cong\oplus_{Z(\GN)}\OO(N_{\tN /W\times\tN /W}\Delta)$$
\end{thm}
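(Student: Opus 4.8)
The plan is to transport the computation to the untwisted affine Grassmannian $\GrH$ of $H$, where it is the theorem of \cite{BF}. Since $H^\vee=\GN$, that theorem applied to $H$ reads $\CohH\cong\oplus_{Z(\GN)}\OO(N_{\tN/W\times\tN/W}\Delta)$ --- the dual maximal torus of $H$ being $\TN$, and $W_H=W$ by the Coxeter isomorphism recalled above --- so it suffices to identify, as rings, $\ExtGFF$ with $\CohH$, passing through the thick model underlying $\F$ just as in \cite{BF}.

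To build that identification I would filter $\Gra$ by closures of the $\GI$-orbits. The sheaf $\F$ vanishes on the orbits not indexed by $\XNplus$, and on each orbit indexed by $\XNplus$ its restriction is --- by the description recalled just before the theorem (all relevant inverse Kazhdan--Lusztig polynomials $Q_{e,w}$ equal to $1$) --- a rank-one $\zeta$-monodromic local system concentrated in degree $0$, exactly the behaviour of the constant sheaf on a pro-smooth stratified space. This yields a convergent spectral sequence computing $\ExtGFF$, and the point is that the isomorphism of Coxeter systems $W'=\XN\rtimes W^{fin}\cong W^{aff}_H$ matches $\GI$-orbit closures and the dominance order with the $H(\C[t^{-1}])$-orbit picture on the thick Grassmannian of $H$, while the affine paving of each $U\cap\Gra^\lambda$ matches that of the corresponding $H$-cell. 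Hence, stratum by stratum, the $E_1$-data --- including loop-rotation weights and the composition product --- agree with those computing $\mbox{Ext}^\bullet_{H(\C[t^{-1}])\rtimes\Gm}(\C,\C)$; equivalently, the full subcategory of $D^\zeta_{\GI\rtimes\Gm}(\Gra)$ generated by $\F$ is identified with the corresponding subcategory for the thick Grassmannian of $H$, carrying $\F$ to the constant sheaf.

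Putting these together yields
$$\ExtGFF\;\cong\;\mbox{Ext}^\bullet_{H(\C[t^{-1}])\rtimes\Gm}(\C,\C)\;\cong\;\CohH\;\cong\;\oplus_{Z(\GN)}\OO(N_{\tN/W\times\tN/W}\Delta),$$
where the index set $Z(\GN)$ --- the centre of $H^\vee$, equivalently $\pi_1(H)$, which parametrizes the components of $\GrH$ --- is precisely the set realized by the representatives $I$ in $\F=\oplus_{i\in I}\F_i$.

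The main obstacle is making the comparison step precise. One must check that the identification with $\GrH$ is a genuine equivalence on the relevant subcategories, so that not only graded dimensions but also the $\C[\hbar]$- and $\OO(\tN/W)^{\otimes 2}$-module structures and the composition product transfer; this forces one to track the equivariant (in particular $\Gm$) structure stratum by stratum and, since $\F$ is only a pro-object, to control base change along the affinely embedded opens $U\cap\Gra^\lambda$ and the passage to the limit. It is also essential that the argument run through $\GrH$: the $T\times\Gm$-fixed locus on $\Gra$ is not discrete, so localization --- and with it any GKM-style description of the answer --- becomes available only after the problem has been moved to $\GrH$, where \cite{BF} applies verbatim.
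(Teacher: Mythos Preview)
Your approach differs from the paper's in a crucial way, and the difference is precisely where your self-identified ``main obstacle'' lies.

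The paper does \emph{not} attempt to transport the ring structure from $\GrH$. It uses the comparison with $H$ only to obtain the graded \emph{dimension} of $\ExtFF$ (via the degenerate $\II$-orbit spectral sequence matched with that for $\CohHH$). The ring map is then built intrinsically: the two natural $\OO(\tN/W)$-actions on $\ExtGFF$ together with the $\C[\hbar]$-action give a homomorphism from $\OO((\tN/W)^2\times\Aone)$; one checks, using equivariant formality and the embedding $\ExtTFF\hookrightarrow\oplus_\lambda H^\bullet_{T\times\Gm}(\lambda)$, that the two $\OO(\tN/W)$-actions agree at $\hbar=0$, so the map factors through $\oplus_{Z(\GN)}\OO(N_{(\tN/W)^2}\Delta)$. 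Injectivity follows from localization, and surjectivity from the dimension count already established. No categorical equivalence with $\GrH$, and no matching of composition products across spectral sequences, is needed.

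Your route instead asks for a ring isomorphism $\ExtGFF\cong\CohH$ obtained by matching $E_1$-pages ``including the composition product''. You correctly flag that this is the hard step, but you do not carry it out: agreement of $E_1$-terms as bigraded groups does not by itself determine the multiplicative structure on the abutment, and producing a genuine monoidal or dg-equivalence between the relevant subcategories (with $\F$ only a pro-object) is real work that your sketch does not supply. Your final paragraph also contains a misconception: localization to $T\times\Gm$-fixed loci \emph{is} available on $\Gra$ and is used in the paper's proof --- the fixed components are $\Gm$-torsors over points, but $\F$ restricts there to a rank-one local system (or zero), so $\ext(\F,\F)$ is equivariantly formal and embeds into its fixed-point restriction. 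The paper's strategy exploits exactly this, sidestepping your obstacle entirely.
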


\begin{proof}
Let $\II$ be the subgroup of $\GI$ of elements that are in $B^-$ when $t\rightarrow\infty$.  Consider the spectral sequence associated to the filtration of $\Gra$ by $\II$-orbits.  By Theorem 5.3.5. of \cite{KT}, the stalks of $\F$ satisfy parity-vanishing, so this spectral sequence degenerates.  Via $\XN\simeq X_*(T_H)$, the $\II$-orbits where $\F$ is non-zero correspond exactly to the corresponding orbits in $\GrH$, and by \cite{KT} (5.3.10), so do the degrees of the stalks/costalks.  Therefore $\ExtFF$ is computed by the same (degenerate) spectral sequence as $\CohHH$, and in particular is isomorphic to $\CohHH$ as a graded vector space.  Note that according to \cite{G1}, the latter is a polynomial algebra in variables whose degrees are twice the exponents of $\gN$.

We now argue exactly as in \cite{BF} 3.1.  We have two morphisms $pr_1^*,pr_2^*:\sum \OO(\T/W)\rightarrow \ExtGFF$, and a morphism $pr^*:\C[\hbar]\rightarrow \ExtGFF$  We claim that $pr_1^*|_{\hbar==0}pr_2^*|_{\hbar==0}$.  To see this, note by parity vanishing of $\F$, $\extFF$ is equivariantly formal, so $\ExtTFF\hookrightarrow \oplus_\lambda H_{T\times\Gm}(\lambda)$.  By \cite{BF} 3.2 (which is doesn't depend on their 3.1), the left $\OO(\T)$-action and the right $\OO(\T/W)$-action on $H_{T\times\Gm}(\lambda)$ commute when $\hbar=0$, so the morphism $(pr_1^*,pr_2^*,pr^*)$ factors through a morphism $\alpha:\oplus_{Z(\GN)}\OO(N_{\T/W\times\T/W}\Delta)\rightarrow \ExtGFF$.  Since the localization

$$\alpha_{loc}:\oplus_{Z(\GN)}\OO(N_{\T/W\times\T/W}\Delta)\otimes_{\OO(\T/W\times\Aone)}\mbox{Frac}(\OO(\T\times\Aone))$$
$$\rightarrow \ExtGFF\otimes_{\OO(\T/W\times\Aone)}\mbox{Frac}(\OO(\T\times\Aone))$$

is injective, so is $\alpha$.  Since $\ExtGFF$ has the same graded dimension as $\CohH$, which by \cite{BF} 3.1 has the same graded dimension as $\oplus_{Z(\GN)}\OO(N_{\T/W\times\T/W}\Delta)$, $\alpha$ is an isomorphism.

\end{proof}

\begin{rmk}
Notice that if $\GN$ is not adjoint, $\ExtFF$ looks like the cohomology of a disconnected space, even though $\Gra$ may be connected (e.g. for $G=SL_2$, $N=2$).  One we to see this as is follows:  by parity vanishing, the sheaf $\extFF$ is equivariantly formal in the sense of \cite{GKM1}, hence its cohomology embeds into the its cohomology restricted to the fixed points, and further $\ExtFF$ is defined as the kernel of the boundary map to $X_1$, the union of 0- and 1-dimensional orbits.  The components of $X_1$ are labeled by $Z(\GN)$.  This is why we have to take a direct sum in the definition of $\F$.
\end{rmk}

\subsection{Fiber functor}

By definition, there is a natural action of $\OO(\T\times\T/W\times\Aone)$ on $\ExtTFa$, and for $\A=\Al$ this action factors through the diagonal morphism

$$\OO(\T/W\times\T/W\times\Aone)\rightarrow \oplus_{Z(\GN)}\mathcal{O}(N_{\tN /W\times\tN /W}\Delta)\simeq \ExtTtFF$$

Let $\pi:\tN\rightarrow\tN/W$ denote the projection.  Thus $(\pi^*,Id,Id)$ is the natural map from $G$- to $T$-equivariant cohomology.  Let $\Gamma_\lambda\subset\T\times\T\times\Aone$ be the subscheme defined by the equation $x_2=x_1+a\lambda$.

Each $N_{-}(\K)$ orbit is contained in a $\GI$-orbits, so $\F$ is constant along these orbits with 1-dimensional stalks.  It is then easy to see that $\fiber$ coincides with the fiber functor of \cite{FL}.  Following \cite{BF} 3.2 we define the canonical filtration on $\ExtTFa$ for any $\A\in\Perv$.  Let $\Il$ be inverse image under $\Pi$ of the semi-infinite $N_{-}(\K)$-orbit through $\lambda$.  We filter $\ExtTFa$ by the closures of $\Il$.  The associated graded of this filtration is

$$\bigoplus_{\lambda\in\XN}\Ext_{\Il,T\times\Gm}(\F,\A)$$

Let $i_\lambda$ be the locally closed embedding of $\Il$ in $\Gra$, and $j_\lambda$ the embedding of $\Pi^{-1}\lambda$ in $\Il$.  Then

$$\Ext_{\Il,T\times\Gm}(\F,\A)=\Ext_{T\times\Gm}(j_\lambda^*i_\lambda^*\F,j_\lambda^*i_\lambda^!\A)\otimes j_\lambda^*i_\lambda^!\A$$

On $\Pi^{-1}\lambda$ both $j_\lambda^*i_\lambda^*\F$ and $j_\lambda^*i_\lambda^!\A$ reduce to $\L^\zeta$, and $\Ext_{T\times\Gm}(j_\lambda^*i_\lambda^*\F,j_\lambda^*i_\lambda^!\A)\simeq H^\bullet_{T\times\Gm}(\lambda)\simeq (Id,\pi,Id)_*\OO(\Gamma_\lambda)$ by \cite{BF} 3.2.  If $\A=S(V)$, where $V$ is a representation of $\GN$, then $j_\lambda^*i_\lambda^!\A$ is the $\lambda$ weight space of $V$, which we write $V^\lambda$.  This proves

\begin{lem}
For $V_\lambda\in\mbox{Rep}(\GN)$, the $\OO(\tN\times(\tN/W)\times\Aone)$-module $\ExtTFA$ has a canonical filtration with associated graded $\oplus_\lambda(Id,\pi,Id)_*\OO(\Gamma_\lambda)\otimes V^\lambda$, in particular $\ExtTFA$ is flat as an $\OO(\T\times\Aone)$-module.
\end{lem}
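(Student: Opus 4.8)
The argument is the one sketched in the discussion above, following \cite{BF} 3.2; here is the plan, in the order I would carry it out. Fix $V=V_\lambda\in\mbox{Rep}(\GN)$ and write $\A=S(V)=\Al$. First I would set up the filtration of $\ExtTFa$ by the closed subsets $\Ilb$ (the closures of the $\Pi$-preimages of the semi-infinite $N_-(\K)$-orbits), check that these are $\OO(\T\times(\tN/W)\times\Aone)$-submodules and that the filtration is exhaustive, and then prove that the resulting spectral sequence degenerates, so that the associated graded is $\bigoplus_{\lambda\in\XN}\Ext_{\Il,\Tt}(\F,\A)$. Degeneration should follow from parity: by Theorem 5.3.5 of \cite{KT} the stalks and costalks of $\F$ along $\GI$-orbits, hence along the $N_-(\K)$-orbits contained in them, satisfy parity vanishing, while for the IC sheaf $\A$ the costalk along a semi-infinite orbit is concentrated in a single degree by the twisted Mirković--Vilonen theory of \cite{FL}; the relevant parities being opposite, all differentials and extension problems between graded pieces vanish.

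Next I would identify each graded piece. Applying base change along $\il$ and then along $j_\lambda:\Pi^{-1}\lambda\hookrightarrow\Il$, and using that $\F$ is constant with one-dimensional stalks on every $N_-(\K)$-orbit, one obtains
$$\Ext_{\Il,\Tt}(\F,\A)\;\simeq\;\Ext_{\Tt}(j_\lambda^*\il^*\F,\,j_\lambda^*\il^!\A)\otimes j_\lambda^*\il^!\A .$$
On the $\Gm$-torsor $\Pi^{-1}\lambda$ every $\zeta$-monodromic complex is a sum of shifts of the rank one local system, so the first factor is the $\Tt$-equivariant cohomology $H^\bullet_{\Tt}(\lambda)$ of $\Pi^{-1}\lambda$ with this local system; by \cite{BF} 3.2 this is $(Id,\pi,Id)_*\OO(\Gamma_\lambda)$, the monodromy accounting precisely for the shift $x_2=x_1+a\lambda$ cutting out $\Gamma_\lambda$. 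The second factor $j_\lambda^*\il^!\A$ is the costalk of $\Al=S(V)$ along the semi-infinite orbit, which by twisted geometric Satake (\cite{FL}, following \cite{MV}) is the weight space $V^\lambda$. Assembling, the associated graded is $\bigoplus_\lambda(Id,\pi,Id)_*\OO(\Gamma_\lambda)\otimes V^\lambda$, and canonicity of the filtration is inherited from the geometric description of the $\Ilb$.

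For the flatness statement: $\Gamma_\lambda$ is the graph of the translation $x_1\mapsto x_1+a\lambda$, so $\OO(\Gamma_\lambda)\cong\OO(\T\times\Aone)$ is free of rank one over $\OO(\T\times\Aone)$ (through the first $\T$-factor and $\Aone$), whence each $(Id,\pi,Id)_*\OO(\Gamma_\lambda)\otimes V^\lambda$ is a finite free $\OO(\T\times\Aone)$-module. Since flatness is stable under extensions and filtered colimits, a module carrying an exhaustive filtration with flat subquotients is flat, so $\ExtTFA$ is flat over $\OO(\T\times\Aone)$.

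The step I expect to be the main obstacle is the degeneration of the filtration spectral sequence together with the clean splitting of the costalk $j_\lambda^*\il^!\A$ off as a tensor factor, since here --- unlike in \cite{BF} --- the $\Tt$-fixed points on $\Gra$ are not isolated and we work with monodromic coefficients. A safe way to handle both is to transfer the entire local computation to the untwisted Grassmannian $\GrH$ via the orbit-by-orbit comparison of \cite{KT} (5.3.10), where the parity vanishing and Mirković--Vilonen statements are standard, and only then translate the answer back.
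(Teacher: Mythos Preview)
Your proposal is correct and follows essentially the same route as the paper: the paper's argument (given in the discussion immediately preceding the lemma) filters by the closures of the $\Il$, identifies each graded piece by base change to $\Pi^{-1}\lambda$ where both $j_\lambda^*i_\lambda^*\F$ and $j_\lambda^*i_\lambda^!\A$ reduce to copies of $L^\zeta$, and then invokes \cite{BF}~3.2 for $H^\bullet_{T\times\Gm}(\lambda)\simeq(Id,\pi,Id)_*\OO(\Gamma_\lambda)$ and \cite{FL} for $j_\lambda^*i_\lambda^!\A\simeq V^\lambda$. You are more explicit than the paper about the degeneration of the spectral sequence and about deducing flatness from freeness of the graded pieces, but these fill in routine details rather than introduce a different idea.
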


This canonical filtration is compatible with the restriction to a Levi subgroup.  Let $M$ be a Levi subgroup of $G$, and $\MN$ corresponding subgroup of $\GN$ by \cite{FL}.  Let $P^-_M$ be the parabolic subgroup of $G$ generated by $M$ and $B^-$, and let $\pi_M:\T/W_M\rightarrow\T/W$ be the projection.  Let $X_*^{+M}(T)\subset X_*(T)$ be the coweights of $T$ that are dominant for $M$.  This set indexes the $P^-_M$-orbits on $\Gra$, and the oribts which admit an $M(\OO)$-equivariant local system with monodromy $\zeta$ are indexed by

$$X^{*+}_M(\TN):=\{\lambda\in X^{+M}_*(T)| d\iota(\lambda)\in NX^*(T)\}$$

Let $\MIl$ denote the $P^-_M$-orbit through $\lambda$, and $p^\lambda:\MIl\rightarrow\GraM$ the natural projection.  Now $(\pi_M,Id,Id)^*\ExtGFa=\Ext_{L\times\Gm}(\F,\Al)$ is filtered by the $\MIl$, with associated graded

$$\bigoplus_{\lambda\in X^*_M(\TN)}\Ext_{\MIl,M\times\Gm}(\F,S(V))=$$
$$\bigoplus_{\lambda\in X^*_M(\TN)}\Ext_{M\times\Gm}(\GraM;p^\lambda_*i_\lambda^!\F,p^\lambda_*i_\lambda^!S(V))=$$
$$(Id,\pi_M,Id)_*\Ext_{M\times\Gm}(\GraM,S_M(V_{\MN}))$$

The following are proved exactly as in \cite{BF} 3.4 and 3.5.

\begin{lem}
The canonical filtration is compatible with restriction to a Levi subgroup.
\end{lem}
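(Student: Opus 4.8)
The plan is to transcribe \cite{BF} 3.4--3.5, the one genuinely new point being the compatibility of the $\zeta$-monodromic structures with passage to the Levi. Recall that the canonical filtration on $\ExtTFa$ is by closures of the semi-infinite orbits $\Il$, $\lambda\in\XN$. The geometric input I would establish first is that each $P^-_M$-orbit $\MIl$ on $\Gra$ is a union of semi-infinite $N_-(\K)$-orbits $\mathcal{I}_\mu$, with $\mu$ ranging over those $W_M$-translates of $\lambda$ lying in $\XN$, and that the closure order on the $\MIl$ is refined by the closure order on the $\mathcal{I}_\mu$; this is the Mirkovi\'c--Vilonen picture for the negative parabolic $P^-_M$, and it survives twisting because the character $\zeta$, built from the dual Coxeter number $\hh$ of $G$ and the divisor $d$ of \cite{FL}, restricts to the analogous datum for $M$. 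In particular $X^{*+}_M(\TN)$ indexes exactly the $P^-_M$-orbits carrying an $M(\OO)$-equivariant rank one local system of monodromy $\zeta$, and the restriction of such a system to $\Pi^{-1}\lambda$ agrees whether it is cut out inside $\Gra$ or inside $\GraM$.

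Granting this, the proof proceeds in three steps. First, since the semi-infinite filtration refines the $P^-_M$-orbit filtration, base-changing the canonical filtration on $\ExtGFa$ along $\pi_M\colon\tN/W_M\to\tN/W$ yields precisely the filtration of $(\pi_M,Id,Id)^*\ExtGFa$ by the $\MIl$ written out before the lemma; equivalently, the associated graded of the $\MIl$-filtration, refined further by the semi-infinite orbits it contains, reproduces the associated graded of the canonical filtration on $\ExtTFa$. Second, one identifies the $\MIl$-graded piece: base change and adjunction along $p^\lambda\colon\MIl\to\GraM$ turn it into $\Ext_{M\times\Gm}(\GraM;p^\lambda_*i_\lambda^*\F,p^\lambda_*i_\lambda^!\A)$, in which $p^\lambda_*i_\lambda^*\F$ is the sheaf $\F$ for $M$ — because the open $\GI$-orbit $U$ meets the $P^-_M$-orbits in an affinely embedded open set and the relevant local system is the restriction of the $G$-one — while for $\A=S(V)$ one has $p^\lambda_*i_\lambda^!S(V)=S_M(V_{\MN})$ by the geometric constant-term (hyperbolic localization) compatibility of the twisted Satake equivalence of \cite{FL}. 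Third, combining the two steps shows that the $\MIl$-filtration of $(\pi_M,Id,Id)^*\ExtGFa$ is the canonical filtration for $M$, with associated graded $\bigoplus_{\lambda\in X^{*+}_M(\TN)}(Id,\pi_M,Id)_*\Ext_{M\times\Gm}(\GraM,S_M(V_{\MN}))$; the asserted compatibility is then exactly the statement that forming the canonical filtration commutes with restriction to $M$, which is what has just been proved.

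The step I expect to be the main obstacle is the twisted geometric input of the first paragraph: one must check carefully that the monodromic datum $\zeta$ is compatible with passage to $M$, so that the identification $\XN\simeq X_*(T_H)$ and the parametrization $X^{*+}_M(\TN)$ of the $P^-_M$-orbits fit together and the rank one local systems restrict correctly. This is where \cite{FL} is genuinely needed rather than a verbatim repetition of \cite{BF}. A secondary subtlety, handled as in the construction of $\F$ via Kashiwara's thick Grassmannian \cite{KT}, is that $\F$ is only a pro-object, so base change along $p^\lambda$ has to be justified at the pro-level; since each $\F_i$ is a limit of honest IC-sheaves on finite-dimensional approximations and $p^\lambda$ is pro-smooth of the appropriate relative dimension, this causes no real difficulty.
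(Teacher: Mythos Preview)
Your proposal is correct and is essentially the same approach as the paper's: the paper gives no proof at all beyond the sentence ``proved exactly as in \cite{BF} 3.4 and 3.5,'' together with the paragraph preceding the lemma that sets up the $P^-_M$-orbit filtration and identifies its associated graded via $p^\lambda$. Your write-up is in fact a more detailed unpacking of that same argument, with the added (and appropriate) emphasis on checking that the monodromic datum $\zeta$ is compatible with restriction to $M$ via \cite{FL}; this is the only point where the argument is not literally a transcription of \cite{BF}, and the paper leaves it implicit.
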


\begin{lem}
The canonical filtration is compatible with the tensor structure on $\Fiber$ given by convolution.
\end{lem}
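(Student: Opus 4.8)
The plan is to establish compatibility of the canonical filtration with the convolution tensor structure by following \cite{BF} 3.5 essentially verbatim, transporting each step into the twisted setting. Recall that the convolution product $\A_1 \star \A_2$ is defined via the convolution diagram $\Gra \widetilde{\times} \Gra \to \Gra$, and that the fiber functor $\fiber$ is monoidal. I would first reduce the claim to a statement about the associated graded: it suffices to show that the filtration on $\ExtTtFF$-module $\mbox{Ext}^\bullet_{\Tt}(\F, \A_1 \star \A_2)$ induced by convolution agrees with the tensor product (over the appropriate base) of the filtrations on $\mbox{Ext}^\bullet_{\Tt}(\F, \A_1)$ and $\mbox{Ext}^\bullet_{\Tt}(\F, \A_2)$, at the level of associated graded pieces indexed by $\lambda = \mu + \nu$ with $\mu, \nu \in \XN$.

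The geometric input is the behavior of the semi-infinite orbits $\Il$ under convolution: the preimage of $\overline{\Il}$ under the convolution map stratifies according to pairs of semi-infinite orbits $(\bar{\mathcal{I}}_\mu \widetilde{\times} \bar{\mathcal{I}}_\nu)$ with $\mu + \nu = \lambda$, exactly as in the untwisted case, because the $N_-(\K)$-orbit structure on $\Gra$ is pulled back from $\Gr$ and the twisting (the local system $\L^\zeta$) is multiplicative along these orbits. So the key points are: (i) $\F$ restricted to any $N_-(\K)$-orbit in the convolution space is again one-dimensional and the twisting factors as an external product, which follows from the remarks preceding Lemma 2.8 (each $N_-(\K)$-orbit lies in a $\GI$-orbit); (ii) the identification $\Ext_{\Il, \Tt}(\F, \A) \simeq (Id,\pi,Id)_*\OO(\Gamma_\lambda) \otimes V^\lambda$ of Lemma 2.8 is multiplicative, i.e. $\OO(\Gamma_\mu) \otimes_{\OO(\T \times \Aone)} \OO(\Gamma_\nu)$ maps to $\OO(\Gamma_{\mu+\nu})$ compatibly with the weight-space multiplication $V_1^\mu \otimes V_2^\nu \to (V_1 \otimes V_2)^{\mu+\nu}$; this is the content of \cite{BF} 3.2--3.5 and does not see the twisting at all, since the subscheme $\Gamma_\lambda \subset \T \times \T \times \Aone$ is defined by the same equation $x_2 = x_1 + a\lambda$.

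Concretely I would: first recall the convolution diagram and the fact that $\fiber$ is monoidal (inherited from \cite{FL}); second, verify that the $N_-(\K)$-orbit stratification of the convolution space behaves as in the untwisted case and that $\F$ together with the twisting is compatible with this stratification (an external tensor product statement, using that convolution of $\F$-type sheaves with IC-sheaves is again built from such data via \cite{KT}); third, run the spectral-sequence / base-change argument of \cite{BF} 3.5 to identify the associated graded of the filtration on $\mbox{Ext}^\bullet_{\Tt}(\F, \A_1 \star \A_2)$ with $\bigoplus_{\mu,\nu} (Id,\pi,Id)_*\OO(\Gamma_\mu) \otimes_{?} (Id,\pi,Id)_*\OO(\Gamma_\nu) \otimes V_1^\mu \otimes V_2^\nu$ and match it with $\bigoplus_\lambda (Id,\pi,Id)_*\OO(\Gamma_\lambda) \otimes (V_1 \otimes V_2)^\lambda$; fourth, check the matching is the expected one by comparing with the Levi case (Lemma 2.9) and reducing to rank one, exactly the strategy indicated for \cite{BF}.

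The main obstacle is the same as the one flagged for \cite{BF} 3.2: one must identify the \emph{multiplication} on associated graded pieces, not merely the underlying graded module. This requires knowing that the natural map $\OO(\Gamma_\mu) \otimes \OO(\Gamma_\nu) \to \OO(\Gamma_{\mu+\nu})$ coming from convolution is the obvious addition-of-sections map, and pinning this down rigorously forces the "clever reduction to rank one" — restricting to a minimal Levi (an $SL_2$ or $PGL_2$) where $\Gra$ is small enough to compute directly, and then using Lemma 2.9 to propagate. In the twisted setting the only extra bookkeeping is that the relevant rank-one Levi may be $PGL_2$ rather than $SL_2$ depending on $N$ and $\hh$, and that the lattice $\XN$ replaces $X_*(T)$; but since the twisting is trivial along $N_-(\K)$-orbits, the rank-one computation is formally identical to the one in \cite{BF}, so I expect this to go through without genuinely new difficulty — the work is entirely in checking that each citation transports, which is why the statement is asserted to follow "exactly as in \cite{BF}".
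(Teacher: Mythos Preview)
Your proposal is correct and follows the same route as the paper, which simply defers to \cite{BF} 3.5; your steps (i)--(iii) are exactly the geometric argument there, namely that the preimage of a semi-infinite orbit closure under the convolution map decomposes into products $\bar{\mathcal{I}}_\mu \widetilde{\times} \bar{\mathcal{I}}_\nu$ with $\mu+\nu=\lambda$, and that the twisting, being pulled back from $\Gr$ and trivial along $N_-(\K)$-orbits, does not obstruct the base-change computation.

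One minor correction: your fourth step and the ``main obstacle'' paragraph conflate this lemma with a later part of the argument. The identification of the multiplication on associated graded pieces --- that $\OO(\Gamma_\mu)\otimes_{\OO(\T\times\Aone)}\OO(\Gamma_\nu)\to\OO(\Gamma_{\mu+\nu})$ is the obvious map --- follows directly from the geometry of the convolution diagram and the additivity of the defining equations $x_2=x_1+a\lambda$; no rank-one reduction is needed here. The reduction to rank one via Levi compatibility enters only later, in the proof of Theorem 2.8 (matching $\ExtGFA$ with $H^\bullet_{\HO\rtimes\Gm}(\Al')$, equivalently with Kostant--Whittaker reduction), where one must pin down a specific isomorphism rather than merely a compatibility of filtrations. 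So your proof sketch is right but carries extra baggage belonging to a different lemma.
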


\subsection{Rank one}

We begin by explaining another way to compute $\ExtTFF$ in rank one.  Let $i:X_0\rightarrow\Gra$ denotes the union of the $T$-fixed points and $j:X_1\rightarrow\Gra$ denote the union of the 0- and 1-dimensional orbits.  By a parity vanishing argument, $\extFF$ is equivariantly formal in the sense of \cite{GKM1}.  Therefore we have an exact sequence

$$0\rightarrow\ExtTFF\rightarrow\Ext_{X_0,T\times\Gm}(i^!\F,i^!\F)\rightarrow\Ext_{X_1,T\times\Gm}(j^!\F,j^!\F)$$

For a given $\lambda\in \XN=X_*(T_H)$, $i_\lambda^!\F$ is a local system of rank one.  By \cite{GKM2} there is a unique one-dimensional orbit connecting each pair of fixed points on $\Gr$.  Let $\T_{\lambda,\mu}$ be the lie algebra of the stabilizer of such a connecting orbit.  The above exact sequence says that $\ExtTFF$ is the subalgebra of functions in $(f_\lambda)\in\oplus_{\lambda}\C[\T,\hbar]\otimes \C_\lambda$ such that $f_\lambda|_{\C[\T_{\lambda,\mu}]}= f_\mu|_{\C[\T_{\lambda,\mu}]}$ for all pairs $\lambda,\mu$.  One can see that this argument applied to $\GrH$ gives the exact same answer.  Note that we can canonically identify the one-dimensional vector spaces $\C_\lambda$ and $\C_\lambda'$ by identifying generator of $\Ext_{\C^*}(L^\zeta,L^\zeta;\Z)$ and $H_{pt}(\Z)$ (here $\C^*$ means the space, not equivariance).

We now assume that $G$ has rank one, so $\GN$ does as well.

%, and $G_N^\vee\simeq SL_2$.

%In this case, if $\alpha$ is the simple root of $G$, the coroot lattice of $\GN$ is generated by $\frac{d}{N}\alpha$, so the fundamental weight of $T_H$ is $\frac{d}{N}\alpha$

\begin{lem}
$\ExtTFan\simeq H_{T_H\times\Gm}(\A_n')$ as $\OO(\tN\times\tN\times\Aone)$-modules in a way that is compatible with the canonical filtrations on each.
\end{lem}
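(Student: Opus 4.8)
The plan is to equip both sides with canonical filtrations having the same associated graded, to realize both as submodules of a common localization, and then to identify them there using the moment-graph (\cite{GKM1}, \cite{GKM2}) description recalled at the start of this subsection; the filtration statement will then follow because the comparison of $\Gra$ with $\GrH$ is order-preserving. For the associated gradeds: both $\A_n$ and $\A_n'$ correspond, under the respective Satake equivalences, to the irreducible $V=V_n\in\mbox{Rep}(\GN)$ (the Langlands dual of $H$ being $\GN$). By the lemma computing the associated graded of $\ExtTFA$, the module $\ExtTFan$ carries its canonical filtration — by the closures of the semi-infinite orbits $\Il$ — with associated graded $\bigoplus_\lambda(Id,\pi,Id)_*\OO(\Gamma_\lambda)\otimes V^\lambda$, and is flat over $\OO(\tN\times\Aone)$. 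Applying \cite{BF} 3.2 with $H$ in place of $G$, the module $H_{T_H\times\Gm}(\A_n')$ carries a canonical filtration, by closures of semi-infinite orbits in $\GrH$, with associated graded $\bigoplus_\lambda(Id,\pi_H,Id)_*\OO(\Gamma_\lambda)\otimes V^\lambda$, and is flat over $\OO(\mathfrak{t}_H^*\times\Aone)$. Under $X_*(T_H)\simeq X^*(\TN)$, $\mathfrak{t}_H^*\simeq\tN$ and the resulting $W_H\simeq W$, these two associated gradeds become the same $\OO(\tN\times\tN\times\Aone)$-module.

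By flatness over $\OO(\tN\times\Aone)$, both modules embed into
$$M:=\Big(\bigoplus_\lambda(Id,\pi,Id)_*\OO(\Gamma_\lambda)\otimes V^\lambda\Big)\otimes_{\OO(\tN\times\Aone)}\mbox{Frac}\big(\OO(\tN\times\Aone)\big),$$
and under these embeddings the two canonical filtrations become the direct-sum filtration of $M$, with associated graded as above. It therefore suffices to show that the two submodules of $M$ coincide and that the induced filtrations agree.

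\textbf{The moment-graph comparison.} By the usual parity-vanishing argument — the stalks of $\F$ are concentrated in a single degree by \cite{KT}, and likewise the relevant (co)stalks of $\A_n$ — the object $\ext(\F,\A_n)$ is equivariantly formal in the sense of \cite{GKM1}, so there is an exact sequence $0\to\ExtTFan\to\Ext_{X_0,\Tt}(i^!\F,i^!\A_n)\to\Ext_{X_1,\Tt}(j^!\F,j^!\A_n)$ exactly as for $\ExtTFF$; concretely $\ExtTFan$ is the subspace of tuples $(f_\lambda)\in\bigoplus_\lambda\C[\tN,\hbar]\otimes\C_\lambda$ (the sum over the weights of $V$) with $f_\lambda|_{\C[\T_{\lambda,\mu}]}=f_\mu|_{\C[\T_{\lambda,\mu}]}$ along each connecting one-dimensional orbit. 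The same description holds for $H_{T_H\times\Gm}(\A_n')$ on $\GrH$. By the comparison used in the computation of $\ExtGFF$ — via $\XN\simeq X_*(T_H)$ the relevant $\II$-orbits on $\Gra$ match those on $\GrH$, with matching connecting one-dimensional orbits, matching stabilizer Lie algebras $\T_{\lambda,\mu}$, and, by \cite{KT}, $\Tt$-equivariant (co)stalks of $\F$ and $\A_n$ agreeing in degree and rank with those of the constant sheaf on $\GrH$ and of $\A_n'$ — and after trivializing the edge data by the canonical identification $\C_\lambda\simeq\C_\lambda'$, the two moment graphs coincide, so the two GKM kernels coincide as submodules of $M$. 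Finally the $\mu$-th step of the canonical filtration is, in this fixed-point picture, the span of the classes supported on $\{\lambda:\lambda\le\mu\}$; since the comparison respects the dominance order on $\XN\simeq X_*(T_H)$ (equivalently the closure order among the $\II$-orbits), the two filtrations on the common submodule agree, and the induced map on associated gradeds is the identification from the first paragraph.

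\textbf{Main obstacle.} The real work — and the reason for restricting to rank one — is making the $\Gra\leftrightarrow\GrH$ dictionary above precise in spite of the mismatch in equivariance: $\F$ is only $\GI$-equivariant (indeed a pro-object), while $\A_n$ is $\GO$-equivariant, so one must check that the \cite{KT} comparison of $\GI$-orbit data, together with the loop-rotation $\Gm$-action and the $\zeta$-monodromy twist, genuinely yields matching $\Tt$-equivariant (co)stalks and attaching maps along all of $\overline{\Il}$, not merely on the open $\GI$-orbit. This fails in higher rank; in rank one each $\overline{\Il}$, and its intersection with every orbit, is an explicit affine space (or affine-space bundle), so the verification reduces to a finite computation parallel to the rank-one reduction of \cite{BF} 3.3. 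Once the lemma is available, the general case follows by the Levi-compatibility lemmas above applied to the rank-one Levi subgroups.
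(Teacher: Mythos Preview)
Your argument is correct and is essentially the paper's: both invoke parity vanishing to obtain equivariant formality, write down the GKM exact sequence for $\ExtTFan$ and for $H_{T_H\times\Gm}(\A_n')$, and identify the two modules by matching the fixed-point and one-dimensional-orbit data on $\Gra$ and $\GrH$ under $\XN\simeq X_*(T_H)$ together with the canonical identification of the one-dimensional lines $\C_\lambda\simeq\C_\lambda'$. The detour through the localization $M$ is unnecessary (the paper compares the GKM sequences directly), and your ``Main obstacle'' paragraph misidentifies the reason for the rank-one restriction: the paper's remark after the lemma explains that the genuine obstruction in higher rank is that the stalks of $\Al$, $\Al'$ are no longer one-dimensional, so the identification $\C_\lambda\simeq\C_\lambda'$ ceases to be canonical --- not the $\GI$-versus-$\GO$ equivariance mismatch you emphasize.
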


\begin{proof}
%Since the stalks of $\F$ and $\A_n$ are 1-dimensional, we can use the localization theorem of \cite{GKM1} to canonically identify the action of $\ExtTFF$ on $\ExtTFan$ with that of $H_{T_H\times\Gm}(\GrH)$ on $H_{T_H\times\Gm}(\A_n')$, and appeal to \cite{BF} 5.1.
Using a filtration and parity the vanishing of stalks, one sees that $\ExtFan$ vanishes in odd degrees, hence the sheaf $\ext(\F,\A_n)$ is equivariantly formal.  Let $i:X_0\rightarrow\Gra$ denotes the union of the $T$-fixed points and $j:X_1\rightarrow\Gra$ denote the union of the 0- and 1-dimensional orbits.  Applying (6.3) of \cite{GKM1} gives

$$0\rightarrow\ExtTFan\rightarrow\Ext_{X_0,T\times\Gm}(i^!\F,i^!\A_n)\rightarrow\Ext_{X_1,T\times\Gm}(j^!\F,j^!\A_n)$$

Here the middle term is equal to $\oplus_{i\leq n}\OO[\T]\otimes \C_i$, and the last term by $\sum_{j<i\leq n}\OO(\T_{ij})\otimes\C_i$, where $\T_{ij}$ is the Lie algebra of the stabilizer of a 1-dimensional orbits between points over $i$ and $j$.  Here $i$ runs over the weights of the representation $V_n$ of $\GN$, and $\C_i$ is the corresponding weight space.  We can write down the corresponding exact sequence for $H$

$$0\rightarrow H_{T_H\times\Gm}(\A_n')\rightarrow H_{X_{0,H},T\times\Gm}(i^!\A_n')\rightarrow H_{X_{1,H},T\times\Gm}(j^!\A_n')$$

Here the middle sequence is also given by $\oplus_{i\leq n}\OO[\T_H]\otimes \C_i$, and the last term by $\sum_{j<i\leq n} \OO(\T_{H,ij}\otimes\C_i$, and we can canonically identify the middle and last terms of these two exact sequences, in a way that is compatible with the actions of $\ExtTFF$ and $H_{T_H\times\Gm}(\GrH)$.

This allows us to identify $\ExtTFan$ and $H_{T_H\times\Gm}(\A_n')$ as filtered $\OO(\tN\times\tN\times\Aone)$-modules without passing to the associated graded.  In particular, it implies that if $s$ is the non-trivial element of $W$, the action $s$ on the associated graded of the canonical filtration coming from $\Gra$ is the same as the action coming from $\GrH$.
\end{proof}

\begin{rmk}
The proof of the above lemma relies on the fact that the stalks of $\A_n$, $\A_n'$ are one-dimensional.  In general we know that the stalks of both $\Al$ and $\Al'$ at $\mu$ have the same dimension as the $\mu$ weight space of $V_\lambda$, but we don't have a natural way to identify the stalks themselves.  If we can define a Hopf algebra structure on $\Ext(\F,\F)$ and show that this Hopf algebra is isomorphic to the enveloping algebra of the centralizer of a principal nilpotent in $\gN$, one can use the Brylinski filtration to canonically identify the stalks of $\Al$ and $\Al'$.  It should be possible to do this using the Beilinson-Drinfeld Grassmannian.
%This will be explained in a future version.
\end{rmk}

We can now prove

\begin{thm}
We have a natural isomorphism of the $\OO(\tN/W\times/\tN/W\times\Aone)$-modules $\ExtGFA$ and $H^{\bullet}_{\HO\rtimes\Gm}(\Al')$ that preserves the canonical filtrations and induces the identity on the associated graded pieces.
\end{thm}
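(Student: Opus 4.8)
The plan is to obtain this from the $T\times\Gm$-equivariant comparison of the previous lemma by passing to $W$-invariants, where $W$ is the Weyl group of $G$, identified with the Weyl group $W_H$ of $H$ via the identification of finite Weyl groups recalled above (the isomorphism $W'\cong W^{aff}$ being the identity on finite parts). As in \cite{BF}, using the base change relating $G$- and $T$-equivariant cohomology together with the freeness of $\OO(\tN)$ over $\OO(\tN/W)$, the natural map from $G\times\Gm$- to $T\times\Gm$-equivariant Ext identifies $\ExtGFA$ with the $W$-invariants of $\ExtTFA$, and identifies $H^{\bullet}_{\HO\rtimes\Gm}(\Al')$ with the $W_H$-invariants of $H^{\bullet}_{T_H\times\Gm}(\Al')$. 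Thus everything reduces to showing that the isomorphism $\Phi$ of the previous lemma is $W$-equivariant, where $W$ acts on the source through $N_G(T)\subset G\subset\GO$ and $W_H$ on the target through $N_H(T_H)$.

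To prove that, I would use the construction of $\Phi$: it is the isomorphism induced on kernels by the canonical identification, made in the previous lemma, of the GKM exact sequence computing $\ExtTFA$ with the one computing $H^{\bullet}_{T_H\times\Gm}(\Al')$. On the middle term of such a sequence the $W$-action is the usual one: the nontrivial element $s$ sends the summand indexed by a $T\times\Gm$-fixed point $\mu$ onto the one indexed by $s\mu$, by the $s$-twist on the $\OO(\tN)$-factor tensored with a linear isomorphism $\C_\mu\xrightarrow{\ \sim\ }\C_{s\mu}$, and likewise on the last term. Hence $\Phi$ is $W$-equivariant once we know that the identification of the two middle terms intertwines these linear isomorphisms. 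Under the identification $\C_\mu\cong V^\mu$ with the $\mu$-weight space of the $\GN$-module $V$ attached to $\Al$ (equivalently $\Al'$), which uses the canonical identification of $\C_\mu$ with $\C_\mu'$ recalled just before the previous lemma, the map $\C_\mu\to\C_{s\mu}$ on the $\Gra$-side is the one induced by $N_G(T)$ acting on the stalks of $\Al$ at the $T$-fixed points, and on the $\GrH$-side the one induced by $N_H(T_H)$ acting on the stalks of $\Al'$. By geometric Satake these are the Weyl-group actions on $V$ regarded, respectively, as a module over $\GN$ and over $H^{\vee}$ (see \cite{MV}, and \cite{FL} in the twisted case); in the rank-one case at hand, with one-dimensional stalks, this can also be checked directly. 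Since $H^{\vee}=\GN$ and the two module structures on $V$ coincide, so do the two families of isomorphisms, and $\Phi$ is $W$-equivariant.

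Passing to $W$-invariants then produces a natural isomorphism $\ExtGFA\cong H^{\bullet}_{\HO\rtimes\Gm}(\Al')$, linear over $\OO(\tN/W\times\tN/W\times\Aone)=\OO(\tN\times\tN\times\Aone)^{W}$ because $\Phi$ is $\OO(\tN\times\tN\times\Aone)$-linear. For the canonical filtrations I would use, as in \cite{BF} and as in the compatibility with Levi restriction above applied to $M=T$, that the natural map $\ExtGFA\to\ExtTFA$ exhibits $\ExtTFA$ as the base change of $\ExtGFA$ along $\OO(\tN/W)\to\OO(\tN)$, compatibly with canonical filtrations, and similarly on the $H$-side. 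Since $\Phi$ is filtered and induces the identity on associated graded, and since $\OO(\tN)$ is faithfully flat over $\OO(\tN/W)$ with $\bigl(M\otimes_{\OO(\tN/W)}\OO(\tN)\bigr)^{W}\cong M$ by Chevalley's freeness theorem, the invariant isomorphism is again filtered and induces the identity on associated graded pieces.

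The one step that is not either contained in \cite{BF} or a routine consequence of the previous lemma and equivariant formality is the comparison of $W$-actions in the middle paragraph. On the $\GrH$-side it is classical geometric Satake; on the twisted side the most economical route is probably to deduce it from the orbit-by-orbit comparison between $\Gra$ and $\GrH$ already underlying the previous lemma and \cite{KT}, rather than extracting the corresponding statement from \cite{FL}.
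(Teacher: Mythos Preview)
Your argument has a genuine gap. You assume the existence of a $T\times\Gm$-equivariant isomorphism $\Phi:\ExtTFA\xrightarrow{\sim} H^\bullet_{T_H\times\Gm}(\Al')$ coming from ``the previous lemma'' via a GKM-type identification of the middle terms of the two localization sequences, and then descend by taking $W$-invariants. But that lemma is proved only in rank one, and its proof rests on the stalks of $\A_n$ and $\A_n'$ at $T$-fixed points being one-dimensional: this is what allows the ``canonical identification of $\C_\mu$ with $\C_\mu'$'' you invoke. In higher rank the stalk at $\mu$ has dimension $\dim V^\mu>1$, and while the dimensions on the two sides agree, there is no a priori canonical identification of these vector spaces --- this is exactly the obstruction spelled out in the remark immediately following the rank-one lemma. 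So the canonical identification of the two GKM middle terms that your $\Phi$ is supposed to come from does not exist in general, and you are taking $W$-invariants of an isomorphism that has not yet been constructed.

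The paper circumvents this obstacle rather than confronting it. One pulls back along $(\pi,Id,Id)$ and localizes by tensoring with $\mathbf{k}(\T\times\Aone)$; after localization the canonical filtration splits on both sides, so both become $\bigoplus_\lambda (Id,\pi,Id)_*\OO(\Gamma_\lambda)\otimes V^\lambda\otimes_{\OO(\T\times\Aone)}\mathbf{k}(\T\times\Aone)$. The rank-one lemma, combined with Levi compatibility, is used only to check that the $W$-actions on this localized module coming from the two sides coincide --- not to produce an integral $T$-equivariant isomorphism. One then appeals to the submodule $\bigcap_{w,\alpha} w(M_\alpha)$ of \cite{BF}: both $(\pi,Id,Id)^*\ExtGFa$ and $(\pi,Id,Id)^*H_{T_H\times\Gm}(\A')$ land inside it, its sections are regular off a codimension-two locus, and both modules are flat over $\OO(\T\times\Aone)$ (Lemma 2.3 and \cite{BF} 3.3). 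These three facts force both to equal $\bigcap_{w,\alpha} w(M_\alpha)$, which yields the isomorphism. In short, the missing idea in your proposal is the localize--then--recover mechanism via $\bigcap w(M_\alpha)$ and flatness; reduction to rank one enters only to match $W$-actions after localization.
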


\begin{proof}

We follow \cite{BF} Theorem 6.  We have

$$(\pi,Id,Id)^*\ExtGFa\otimes_{\OO(\T\times\Aone)}\mathbf{k}(\T\times\Aone)=$$
$$gr(\pi,Id,Id)^*\ExtGFa\otimes_{\OO(\T\times\Aone)}\mathbf{k}(\T\times\Aone)=$$
$$\bigoplus_{\lambda}(Id,\pi,Id)_*\OO(\Gamma_\lambda)\otimes _\lambda V\otimes_{\OO(\T\times\Aone)}\mathbf{k}(\T\times\Aone)=$$
$$gr(\pi,Id,Id)^*H_{T_H\times\Gm}(\A')\otimes_{\OO(\T\times\Aone)}\mathbf{k}(\T\times\Aone)=$$
$$(\pi,Id,Id)^*H_{T_H\times\Gm}(\A')\otimes_{\OO(\T\times\Aone)}\mathbf{k}(\T\times\Aone)$$

By the proof of Lemma 4.1, and compatibility with Levi subgroups, the action of simple reflection $s\in W$ on

$$\bigoplus_{\lambda}(Id,\pi,Id)_*\OO(\Gamma_\lambda)\otimes _\lambda V\otimes_{\OO(\T\times\Aone)}\mathbf{k}(\T\times\Aone)$$

coming from

$$(\pi,Id,Id)^*\ExtGFa\otimes_{\OO(\T\times\Aone)}\mathbf{k}(\T\times\Aone)$$

is the same as the action coming from

$$(\pi,Id,Id)^*H_{T_H\times\Gm}(\A')\otimes_{\OO(\T\times\Aone)}\mathbf{k}(\T\times\Aone)$$

hence we have a naturally defined action of $W$.

Recall the submodule $\cap^{\alpha}_w w(M_\alpha)\subset \oplus_{\lambda}(Id,\pi,Id)_*\OO(\Gamma_\lambda)\otimes (_\lambda V)\otimes_{\OO(\T\times\Aone)}\mathbf{k}(\T\times\Aone)$.  By \cite{BF} 5.2 (7) $\cap^{\alpha}_w w(M_\alpha)$ contains $(\pi,Id,Id)^*H_{T_H\times\Gm}(\A')\otimes_{\OO(\T\times\Aone)}\mathbf{k}(\T\times\Aone)$, so by Lemma 4.1 it also contains $(\pi,Id,Id)^*\ExtGFa\otimes_{\OO(\T\times\Aone)}\mathbf{k}(\T\times\Aone)$.  By \cite{BF}, any section of $\cap^{\alpha}_w w(M_\alpha)$ is regular away from a codimension 2 subvariety.  By Lemma 3.1 and \cite{BF} 3.3, both $(\pi,Id,Id)^*\ExtGFa$ and $(\pi,Id,Id)^*H_{T_H\times\Gm}(\A')$ are flat $\OO(\T\times\Aone)$-modules that equal $\cap^{\alpha}_w w(M_\alpha)$ after tensoring with $\mathbf{k}(\T\times\Aone)$, hence both are in fact equal to $\cap^{\alpha}_w w(M_\alpha)$.

\end{proof}

\subsection{Comparison of the Kostant functor with cohomology}

Recall the Kostant functor $\kappa_\hbar:\HC\rightarrow\CohN$ for $\GN$ from \cite{BF}.  We can now prove the following

\begin{thm}
Let $S_\hbar:\mbox{Rep}(\GN)\rightarrow \mathcal{P}_{\GO\rtimes\Gm}(\Gra)$ be the inverse of the twisted Satake equivalence.  $S_\hbar$ extends to a full embedding $\S_\hbar:\HC^{fr}\rightarrow \DOZ$ such that

$$\kappa_\hbar\simeq H^\bullet_{\GO\rtimes\Gm}\circ S_\hbar$$
\end{thm}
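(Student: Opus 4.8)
The plan is to imitate the construction of \cite{BF} (their Theorem~6), using the geometric comparison with $\GrH$ developed above in place of their global equivariant cohomology arguments. The construction rests on three ingredients. First, the identification of $\ExtGFF$ with $\oplus_{Z(\GN)}\OO(N_{\tN/W\times\tN/W}\Delta)$ obtained above; by \cite{BF} this is also the ring over which $\kappa_\hbar$ takes values on $\HCfr$, and $\kappa_\hbar|_{\HCfr}$ is fully faithful. Second, for each $\lambda\in\XNplus$ a canonical isomorphism $\ExtGFA\cong\kappa_\hbar(U_\hbar\otimes V_\lambda)$ of modules over that ring, natural in $\lambda$. Third, the purity statement that $\Fiber$ is fully faithful on the full subcategory of $\DOZloop$ generated by the $S_\hbar(V)$ --- the counterpart of the first of the two steps outlined in the introduction. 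Granting these, I would set $\mathcal{S}_\hbar(U_\hbar\otimes V):=S_\hbar(V)$ on objects and, for a morphism $f\colon U_\hbar\otimes V\to U_\hbar\otimes V'$ in $\HCfr$, let $\mathcal{S}_\hbar(f)$ be the unique morphism $S_\hbar(V)\to S_\hbar(V')$ that $\Fiber$ carries to the image of $\kappa_\hbar(f)$ under the second ingredient --- existence and uniqueness being provided by the third. Functoriality of $\mathcal{S}_\hbar$ is then automatic from that of $\kappa_\hbar$ and $\Fiber$; $\kappa_\hbar\simeq\Fiber\circ\mathcal{S}_\hbar$ holds by construction; naturality of the second ingredient shows that $\mathcal{S}_\hbar$ restricts to $S_\hbar$ along $\mbox{Rep}(\GN)\hookrightarrow\HCfr$; and $\mathcal{S}_\hbar$ is fully faithful because both $\kappa_\hbar|_{\HCfr}$ and the restriction of $\Fiber$ to the Satake image are.

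To obtain the isomorphisms of the second ingredient I would proceed as follows. Since $H$ is Langlands dual to $\GN$, the abstract Kostant functor attached to $H$ in \cite{BF} is exactly our $\kappa_\hbar$, and under $X_*(T_H)\simeq X^*(\TN)$ the $H$-side irreducible labelled by $\lambda'$ is $V_\lambda\in\mbox{Rep}(\GN)$; hence the theorem of \cite{BF} applied to $H$ gives a canonical isomorphism $H^\bullet_{\HO\rtimes\Gm}(\Al')\cong\kappa_\hbar(U_\hbar\otimes V_\lambda)$ of modules over $\CohH\cong\oplus_{Z(\GN)}\OO(N_{\tN/W\times\tN/W}\Delta)$, realized as the value on $U_\hbar\otimes V_\lambda$ of the full embedding $\HCfr\to D_{\HO\rtimes\Gm}(\GrH)$. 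Composing with the isomorphism $\ExtGFA\cong H^\bullet_{\HO\rtimes\Gm}(\Al')$ just established, and matching the two presentations of the base ring by means of the computation of $\ExtGFF$, gives the desired isomorphism object by object. For naturality one has to promote $\ExtGFA\cong H^\bullet_{\HO\rtimes\Gm}(\Al')$ to a statement about functors; this should follow from the fact that it is pinned down by its compatibility with the canonical filtrations on both sides, which by the lemmas on compatibility with restriction to a Levi and with convolution are functorial, together with the corresponding naturality on $\GrH$.

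The step I expect to be the main obstacle is the third ingredient: the purity of $\ExtGFA$ and of $\Ext_{\GO\rtimes\Gm}(\Al,\Am)$, and the resulting full faithfulness of $\Fiber$ on the Satake image. As noted in the introduction, the fixed-point argument of \cite{G2} does not apply verbatim, since the $T\times\Gm$-fixed locus on $\Gra$ is positive-dimensional and the coefficients are $\zeta$-monodromic. The remedy I would pursue is the one already used for the computation of $\ExtGFF$: parity vanishing of the stalks of $\F$ and of the sheaves $\Al$ (by \cite{KT}~5.3.5) degenerates the spectral sequence of the stratification of $\Gra$ by $\II$-orbits, while \cite{KT}~5.3.10 matches its strata and stalk/costalk degrees with those of the analogous stratification of $\GrH$ by orbits of the corresponding subgroup; so both the purity and the identification of these $\Ext$ groups with $\Hom$'s of cohomology modules descend from the untwisted statements on $\GrH$, where they follow from \cite{G2}. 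A secondary, more bookkeeping point is the naturality in the second ingredient, where the object-by-object comparison must be upgraded to a comparison of functors, and it is precisely the filtration-compatibility lemmas that make this go through.
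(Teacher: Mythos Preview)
Your overall architecture is exactly the paper's: identify the base ring, produce the object-by-object module isomorphism via the comparison with $\GrH$, and use full faithfulness of both $\kappa_\hbar$ and $\Fiber$ to promote this to a functor. The paper's proof of the theorem is essentially two lines: cite the full faithfulness of $\Fiber$ (its Theorem~3.1) and of $\kappa_\hbar$ (\cite{BF}, Lemma~4), invoke the rank-one comparison, and then conclude by the same reduction-to-rank-one argument as in \cite{BF}~Theorem~6 (equivalently, via the comparison theorem with $\GrH$). So your ingredients (1) and (2), and the way you assemble them, match the paper.

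Where you diverge is in your third ingredient. The paper does \emph{not} obtain purity and full faithfulness by ``descending from $\GrH$'' through a spectral-sequence match. For purity of $\Ext_{\GO\rtimes\Gm}(\Al,\Am)$ it uses the convolution adjunction
\[
\Ext(\Al,\Am)\simeq\Ext(\A_0,\Am\ast\Al^\vee)\simeq\bigoplus_\nu\Ext(\A_0,\A_\nu),
\]
together with pointwise purity of the $\A_\nu$ (via a $\Gm$-action, as in \cite{KT}); the equivariant case then follows by the usual classifying-space approximation. For full faithfulness of $\Fiber$ (its Theorem~3.1), injectivity is proved directly by the $I$-orbit filtration and parity vanishing, and surjectivity by a dimension count using the already-established comparison with $\GrH$. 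Your proposed route --- match strata and stalk degrees with $\GrH$ and import \cite{G2} --- would give the dimension count, but matching graded dimensions of a degenerate spectral sequence does not by itself yield purity (weights are not determined by degrees), and ``descending'' the isomorphism $\alpha$ from $\GrH$ would require precisely the functorial compatibility you flag as the secondary obstacle, risking circularity. The paper's adjunction trick and direct injectivity argument sidestep both issues; you should adopt them rather than the descent route you sketch.
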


\begin{proof}
By theorem 6.1 below $H^\bullet_{\GO\rtimes\Gm}$ is fully faithful on the image of $S_\hbar$, and $\kappa_\hbar$ is exact and fully faithful by Lemma 4 of \cite{BF}.  By Lemma 4.1 and Theorem 2 of \cite{BF}, we know the result in rank 1.  We can then conclude exactly as in the proof of \cite{BF} Theorem 6 (or our Theorem 2.8).
\end{proof}

\section{Purity and the derived category}

Using a $\Gm$-action, one may show that the $\Al$ are pointwise pure (see e.g. \cite{KT}, remark preceding Theorem 5.3.5).  Recall the following "adjunction" formula from \cite{FL}

$$\Ext(\A_1\ast\A_2,\A_3)\simeq\Ext(\A_1,\A_3\ast\A_2^\vee)$$

Taking $\A_1=\A_0$, $\A_2=\Al$, and $\A_3=\Am$ we have

$$\Ext(\Al,\Am)\simeq\Ext(\A_0\ast\Al,\Am)\simeq\Ext(\A_0,\Am\ast\Al^\vee)\simeq\bigoplus_{\nu}\Ext(\A_0,\A_\nu)$$

The pointwise purity of the $\A_\nu$ at $\tilde{0}$ then implies that $\Ext(\Al,\Am)$ is pure.  To extend this to $T\times\Gm$-equivariant Ext's, let $P_i$ be finite dimensional approximations to the classifying space of $T\times\Gm$, so that we have ind-varieties $P_i\Gra$ fibred over $P_i$ with fiber $\Gra$ such that $P_i\A$ be the sheaf on $P_i\Gra$ giving the equivariant structure on $\A$.  By defining a suitable $\Gm$-action, one can show the sheaf $P_i\A$ is also pointwise pure.  By definition,

$$\Ext_{T\times\Gm}(\Al,\Am)=\lim_{\rightarrow}\Ext_{P_i\Gra}(P_i\Al,P_i\Am)$$

We have a corresponding 'adjunction' formula on each $P_i\Gra$, due to the degeneracy of the Leray-Hirsch spectral sequence, which allows us to prove purity by the same argument in the equivariant case.

We now turn to the computation of $\Ext_{T\times\Gm}(\Al,\Am)$.  We have a canonical map

$$\alpha:\Ext_{T\times\Gm}(\Al,\Am)\rightarrow\Hom_{\ExtTtFF}(\Ext_{T\times\Gm}(\F,\Al),\Ext_{T\times\Gm}(\F,\Am))$$

\begin{thm}
$\alpha$ is an isomorphism of graded vector spaces.
\end{thm}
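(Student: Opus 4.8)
The plan is to follow the strategy of \cite{BF} Theorem 3 (the analogue of describing a nice abelian category as modules over the endomorphism ring of a projective generator), adapted to the monodromic setting via the sheaf $\F$. First I would reduce the problem to a question about the associated graded with respect to the canonical filtrations. By Lemma 3.2, $\Ext_{T\times\Gm}(\F,\Al)$ carries a canonical filtration whose associated graded is $\bigoplus_\lambda(Id,\pi,Id)_*\OO(\Gamma_\lambda)\otimes V_\lambda^\mu$ (abusing notation as in the excerpt), and the same holds for $\Am$; by Theorem 3.1, $\ExtTtFF\cong\oplus_{Z(\GN)}\OO(N_{\tN/W\times\tN/W}\Delta)$. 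The map $\alpha$ is a map of filtered $\OO(\T\times\Aone)$-modules, so it suffices to check that the induced map on associated graded pieces is an isomorphism and then invoke completeness/exhaustiveness of the filtrations (both sides are flat over $\OO(\T\times\Aone)$ by Lemma 3.2 and \cite{BF} 3.3, so no convergence subtlety arises).

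Next I would identify the target after passing to $\mathrm{gr}$. Using the adjunction formula $\Ext(\A_1\ast\A_2,\A_3)\simeq\Ext(\A_1,\A_3\ast\A_2^\vee)$ together with the fact (established just before the statement) that $\Ext(\Al,\Am)$ is pure, I can compute the source $\Ext_{T\times\Gm}(\Al,\Am)$ concretely: purity forces the equivariant Ext to be a free module over $H^\bullet_{T\times\Gm}(\mathrm{pt})$ on the stalk cohomology, and these stalks are, orbit by orbit, governed by the $\GI$-orbit structure which (by the discussion following Theorem 3.1 and by \cite{KT} 5.3.10) matches the $H$-side $\GrH$ picture. On the Hom side, the key input is that $\Ext_{T\times\Gm}(\F,\Al)$ is, after localization, a direct sum indexed by $\lambda\in\XN$ of rank-one $\OO(\Gamma_\lambda)$-modules tensored with weight spaces, so computing $\Hom$ over $\ExtTtFF\cong\oplus_{Z(\GN)}\OO(N\Delta)$ becomes a bookkeeping exercise in matching the $\Gamma_\lambda$-supports. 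The comparison should then reduce, exactly as in \cite{BF}, to the rank-one case, which is handled by Lemma 4.1 and Theorem 4.2: there we have an honest identification of $\ExtTFan$ with $H_{T_H\times\Gm}(\A_n')$ compatible with filtrations, and the corresponding statement for $\GrH$ is \cite{BF} Theorem 3 itself.

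The main obstacle, as in \cite{BF}, is the passage from rank one to general rank: one must show that $\alpha$ becomes an isomorphism after tensoring with $\kk(\T\times\Aone)$, and this localized statement is where the $W$-action and the reduction to Levi subgroups (Lemma 3.3, Lemma 3.4, and the compatibility of $\alpha$ with convolution) do the work. Concretely, I would check that the localized $\alpha$ identifies both sides with the same intersection of translated submodules $\cap_w^\alpha w(M_\alpha)$ inside $\bigoplus_\lambda(Id,\pi,Id)_*\OO(\Gamma_\lambda)\otimes V^\lambda\otimes\kk(\T\times\Aone)$, using that the $W$-action coming from $\Gra$ agrees with that coming from $\GrH$ (the last assertion of the proof of Lemma 4.1) and that the rank-one calculation pins down each $M_\alpha$. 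Once the localized map is an isomorphism, flatness over $\OO(\T\times\Aone)$ on both sides (Lemma 3.2 and \cite{BF} 3.3) upgrades this to an isomorphism of the un-localized filtered modules, hence of the associated graded, hence—by completeness of the canonical filtrations—of $\alpha$ itself. Finally, one descends from $T\times\Gm$ to $G\times\Gm$ by taking $W$-invariants, exactly as in \cite{BF}; since all the comparisons above are $W$-equivariant this is formal.
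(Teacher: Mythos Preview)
Your proposal conflates the argument for Theorem 2.8 (the identification of the fiber functor with its $\GrH$ counterpart) with the argument for the present statement, and in doing so introduces a real gap. The canonical filtration of Lemma 2.3 is a filtration on $\Ext_{T\times\Gm}(\F,\A)$ coming from the stratification by semi-infinite orbits $\Il$; there is no analogous ``canonical filtration'' on the source $\Ext_{T\times\Gm}(\Al,\Am)$, and you never define one. Likewise, $\Hom_{\ExtTtFF}(\Ext(\F,\Al),\Ext(\F,\Am))$ does not inherit a well-behaved filtration from those on its arguments in any obvious way. So your first step---``$\alpha$ is a map of filtered modules, reduce to $\mathrm{gr}$''---does not get off the ground. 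The localization/flatness/$\cap_w w(M_\alpha)$ machinery you invoke is exactly what proves Theorem 2.8, not this theorem; importing it here is at best redundant and at worst circular.

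The paper's proof is quite different and more elementary. Injectivity of $\alpha$ is proved directly: one composes $\alpha$ with the forgetful inclusion into $\Hom_\C$ to get a map $\beta$, stratifies by $I$-orbits (Iwahori orbits, not semi-infinite orbits), uses parity vanishing to degenerate the associated spectral sequence, and then observes that since each such orbit contains at most one $T$-fixed point, $\beta(i_p^! f)=0$ forces $i_p^! f=0$; an induction on the stratification as in \cite{BGS} 3.4.2 gives $f=0$. Surjectivity is a pure dimension count: Theorem 2.8 has already identified both $\Ext_{T\times\Gm}(\F,\Al)$ and the algebra $\ExtTtFF$ with their $\GrH$ analogues, so the target of $\alpha$ has the same graded dimension as $\Hom_{H_{T_H\times\Gm}(\GrH)}(H_{T_H\times\Gm}(\Al'),H_{T_H\times\Gm}(\Am'))$; the source has the same graded dimension as $\Ext_{T_H\times\Gm}(\Al',\Am')$ (again via the comparison with $\GrH$, using parity and the identification of stalks); and the known result of \cite{BF} for $\GrH$ says these two agree. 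The point is that the hard comparison work was already done in Theorem 2.8, and the present theorem just harvests it.
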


\begin{proof}

Let $\beta$ be $\alpha$ composed with the natural inclusion

$$\Hom_{\ExtTFF}(\Ext_{T\times\Gm}(\F,\Al),\Ext_{T\times\Gm}(\F,\Am))\hookrightarrow \Hom_{\C}(\Ext_{T\times\Gm}(\F,\Al),\Ext_{T\times\Gm}(\F,\Am))$$

Consider the filtration given by $I$-orbits indexed by $p$, along which $\Al$ is locally constant.  By parity considerations this spectral sequence degenerates.  Suppose we have $f\in\Ext_{T\rtimes\Gm}(\Al,\Am)$ such that $\beta(f)=0$.  Since the spectral sequence degenerates, for each $p$ we have

$$0=\beta(i_p^!f)\in \Hom_{\C}(\Ext_{T\rtimes\Gm}(\F,i_p^!\Al),\Ext_{T\rtimes\Gm}(\F,i_p^!\Am))$$

Since each orbit $p$ contains at most a single fixed point, this implies that $0=i_p^!f\in\Ext_{T\rtimes\Gm}(i_{p!}i_p^!\Al,i_{p!}i_p^!\Am)$.  By induction on the filtration as in \cite{BGS} 3.4.2, we conclude that $f=0$, so $\beta$ and hence $\alpha$ is injective.  To see that $\alpha$ is surjective, it suffices to note that both sides have the same dimension.  By Theorem 2.8

$$\Ext_{T\times\Gm}(\Al,\Am)\simeq \Ext_{T_H\times\Gm}(\Al',\Am')$$
$$\Hom_{\ExtTFF}(\Ext_{T\times\Gm}(\F,\Al),\Ext_{T\times\Gm}(\F,\Am))\simeq \Hom_{H_{T_H\times\Gm}}(H_{T_H\times\Gm}(\Al'),H_{T_H\times\Gm}(\Am'))$$

and by \cite{BF} we have

$$\Ext_{T_H\times\Gm}(\Al',\Am')\simeq \Hom_{H_{T_H\times\Gm}(\GrH)}(H_{T_H\times\Gm}(\Al'),H_{T_H\times\Gm}(\Am'))$$

Putting these together gives the desired equality of dimensions.

We can now prove Theorem 1.1.
%By Theorem 2.8
%$$\Hom_{\ExtTFF}(\Ext_{T\times\Gm}(\F,\Al),\Ext_{T\times\Gm}(\F,\Am))\simeq %\Hom_{H_{T_H\times\Gm}(\GrH)}(H_{T_H\times\Gm}(\Al'),H_{T_H\times\Gm}(\Am'))$$
Let $\alg_\lambda$ be the endomorphism algebra of the sum of the simple objects of $\DOZ$ over $\mu\leq\lambda$.  Let $\alg_\lambda'$ be the corresponding algebra for $\DOH$.  The above that $\alg_\lambda\simeq\alg_\lambda'$.  Let $\DOZ^{\leq\lambda}$ be the sub triangulated category of objects supported on orbits $\leq\lambda$.  By the purity of the equivariant Ext groups, and Proposition 6 of \cite{BF}, we have a natural functor $\Phi_\lambda:D_{perf}(\alg_\lambda) \rightarrow\DOZ^{\leq\lambda}$ that sends the free module to the $\oplus_{\mu\leq\lambda}\Am$, and induces and isomorphism on Ext groups by Theorem 6.1, so $\Phi_\lambda$ is an isomorphism.  An identical argument for $\alg_\lambda'$, and taking the direct limit over $\lambda$ gives

$$\DOZloop\simeq \lim_{\lambda}D_{perf}(\alg_\lambda)\simeq \lim_{\lambda}D_{perf}(\alg'_\lambda)\simeq \DOH\simeq D_{perf}^{\GN}(U^{\square}_{\hbar})$$

Similar arguments give the result for $\DOZ$.

\begin{rmk}
Given Theorem 2.9 and Theorem 3.1 we can also prove Theorem 1.1 using the argument of \cite{BF} 6.6.
\end{rmk}

\end{proof}

\end{document}